\title{Beyond Windability: An FPRAS for The Six-Vertex Model} 
\author{Zhiguo Fu {}}{School of Information Science and Technology and KLAS, \and Northeast Normal University, Changchun, China 
 }{fuzg432@nenu.edu.cn}{}{}
\author{Junda Li {}}{School of Mathematics and Statistics, \and Northeast Normal University, Changchun, China}{lijd502@nenu.edu.cn}{https://orcid.org/0000-0002-9003-8066}{}
\author{Xiongxin Yang {}}{School of Information Science and Technology, \and Northeast Normal University, Changchun, China}{yangxx500@nenu.edu.cn}{https://orcid.org/0000-0002-0180-3695}{}
\authorrunning{Z.\,Fu, J.\,Li. and X.\,Yang} 
\keywords{The Six-Vertex Model, MCMC, Windability, Coupling} 
\begin{document}

\maketitle

\begin{abstract}
The six-vertex model is an important model in statistical physics and has deep connections with counting problems. There have been some fully polynomial randomized approximation schemes
(FPRAS) for the six-vertex model \cite{M.Mihail/P.Winkler/1996, J.Cai/T.Liu/P.Lu/2019}, which all require that the constraint functions are windable. In the present paper,  
we give an FPRAS for the six-vertex model with an unwindable constraint function 
by Markov Chain Monte Carlo method (MCMC). 
Different from \cite{J.Cai/T.Liu/P.Lu/2019}, we use the Glauber dynamics to design the Markov Chain depending on a circuit decomposition of the underlying graph. Moreover, we prove the rapid mixing of the Markov Chain by coupling, instead of canonical paths in \cite{J.Cai/T.Liu/P.Lu/2019}.
\end{abstract}

\section{Introduction}
\label{sec:typesetting-Introduction}

The six-vertex model originates in statistical mechanics for crystal lattices with hydrogen bonds.
A state of the model consists of an arrow on each edge such that the number of arrows pointing inwards at each vertex is exactly two. This 2-in-2-out law on the arrow configurations 
is called the ice rule (It is also called the ice-type model). Thus there are six permitted types of local configurations around a vertex, hence the name six-vertex model (See \cref{six-type}).
The six configurations 1 to 6 are associated with six possible weights $\omega_1, \omega_2, \cdots, \omega_6$.
\begin{figure}[ht]
\begin{subfigure}[t]{0.15\textwidth}
	\centering
	\includegraphics[width=1\textwidth]{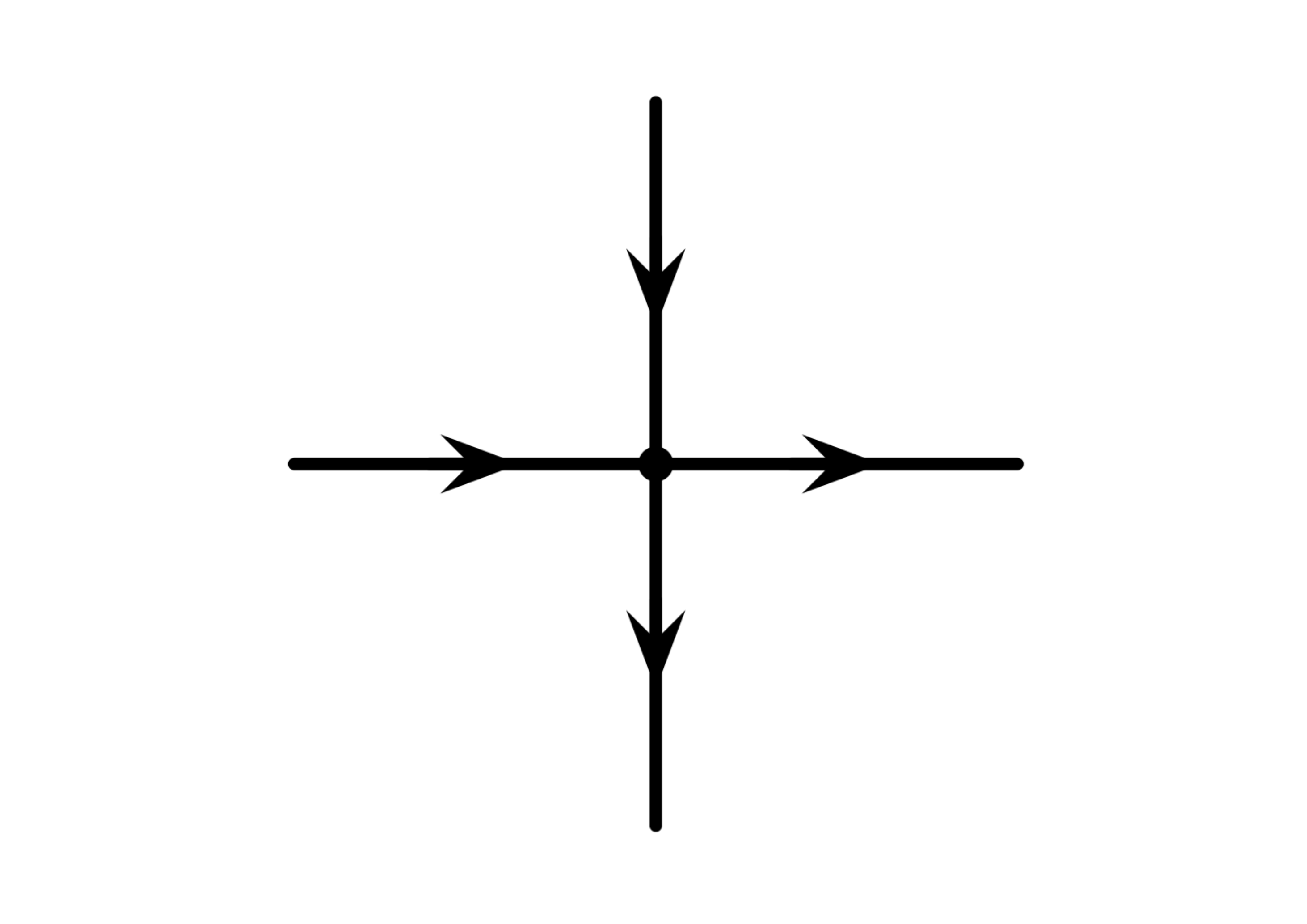}
	\subcaption*{1}
\end{subfigure}
\begin{subfigure}[t]{0.15\textwidth}
	\centering
	\includegraphics[width=1\textwidth]{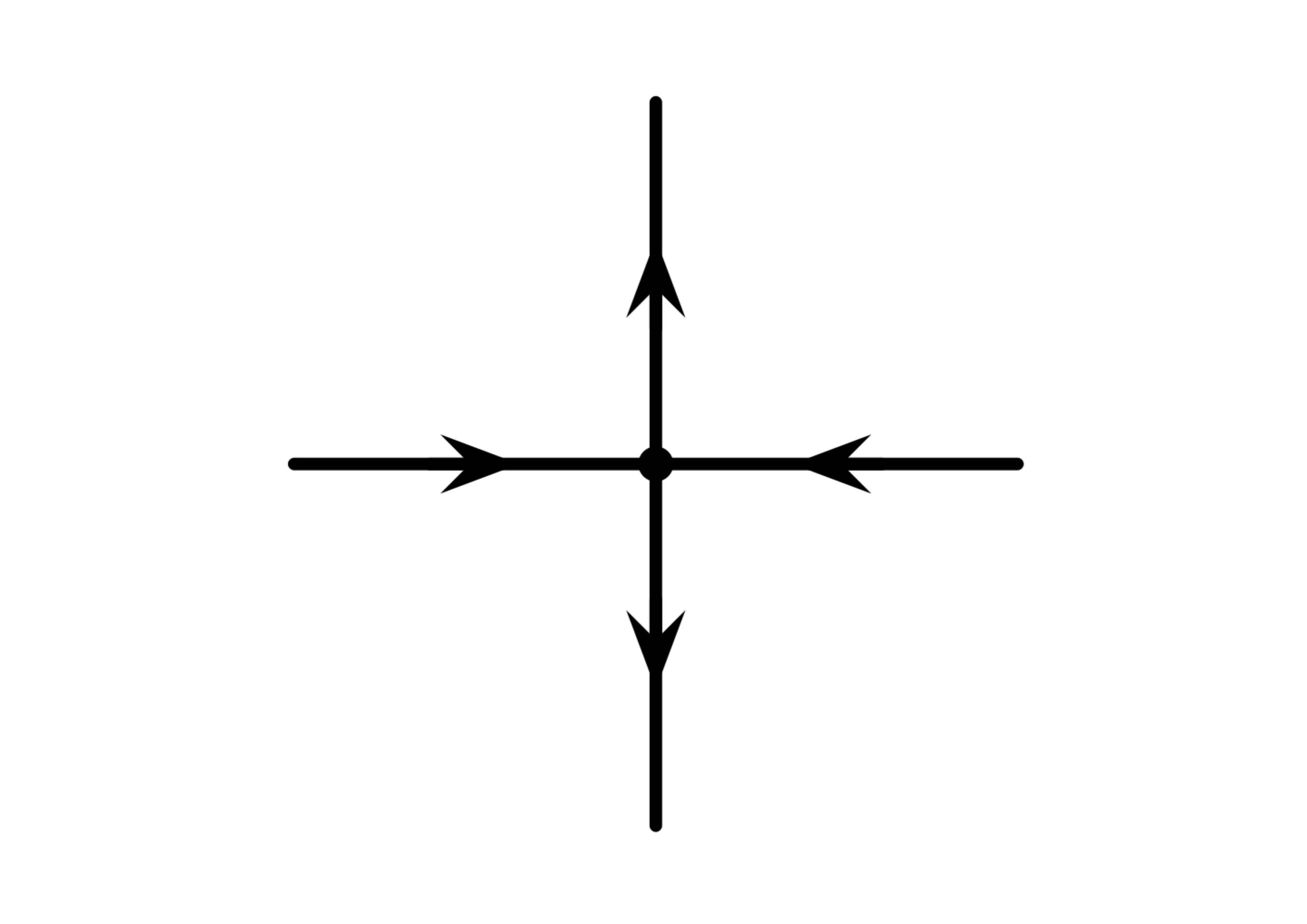}
	\subcaption*{2}
\end{subfigure}
\begin{subfigure}[t]{0.15\textwidth}
	\centering
	\includegraphics[width=1\textwidth]{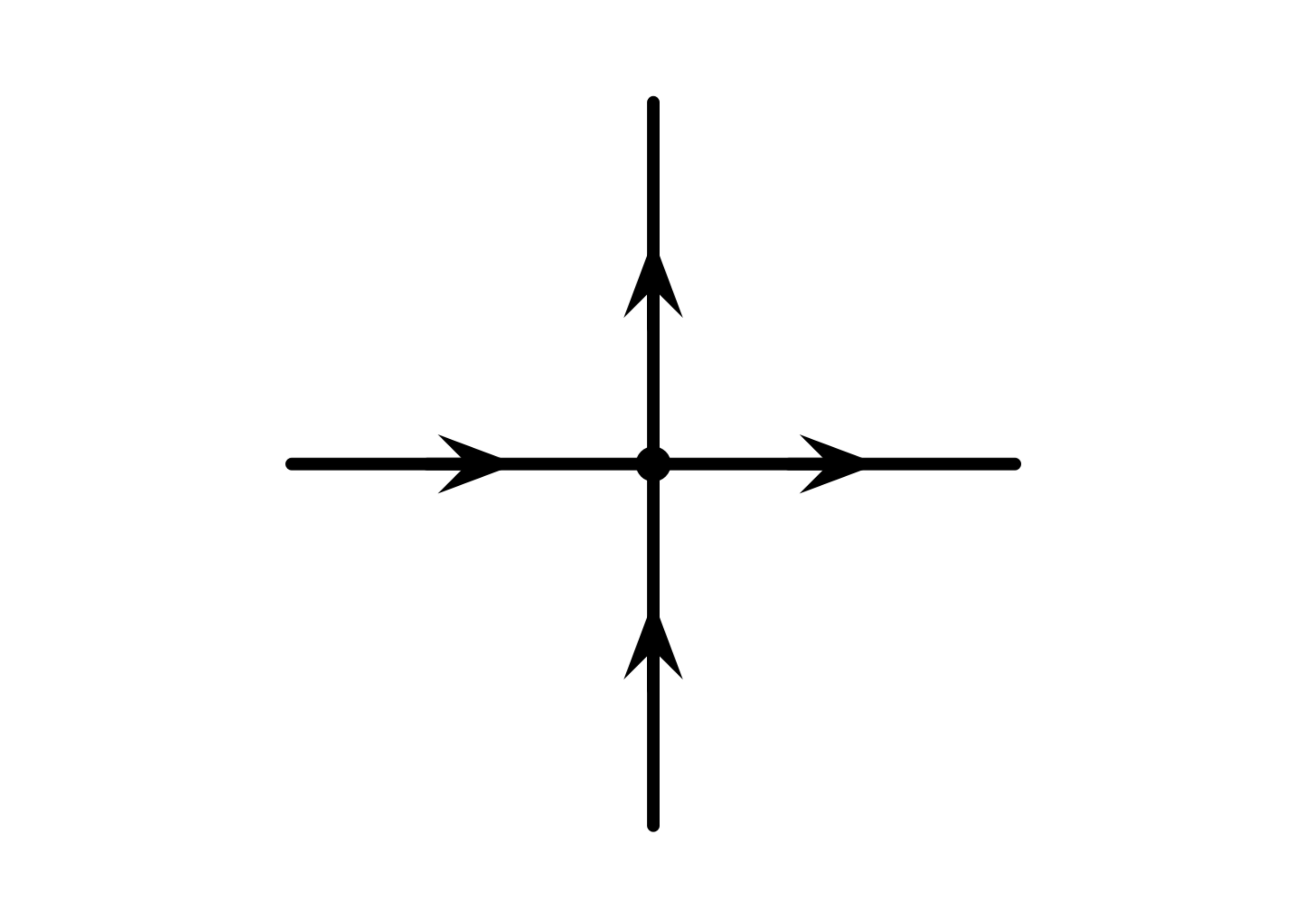}
	\subcaption*{3}
\end{subfigure}
\begin{subfigure}[t]{0.15\textwidth}
	\centering
	\includegraphics[width=1\textwidth]{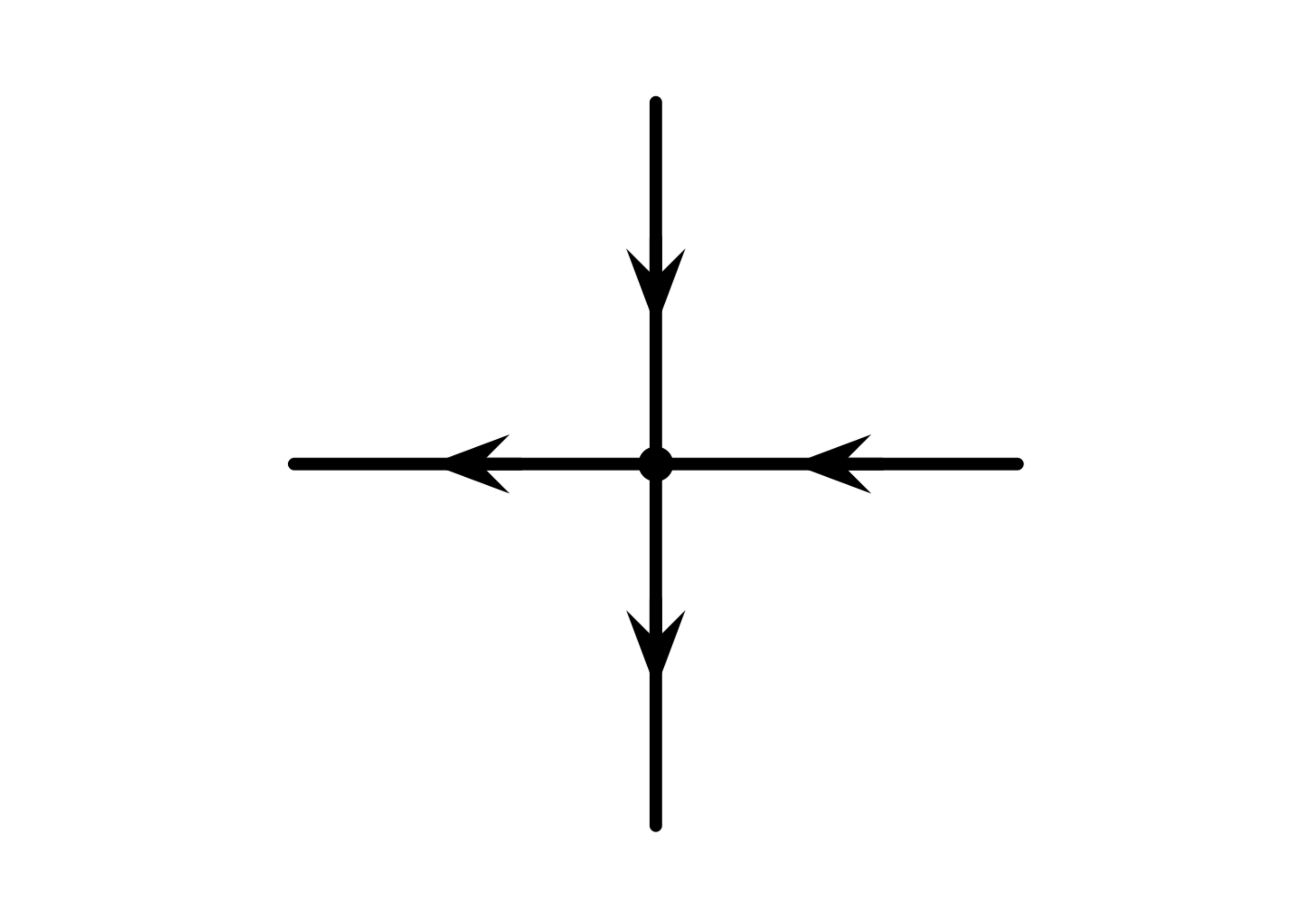}
	\subcaption*{4}
\end{subfigure}
\begin{subfigure}[t]{0.15\textwidth}
	\centering
	\includegraphics[width=1\textwidth]{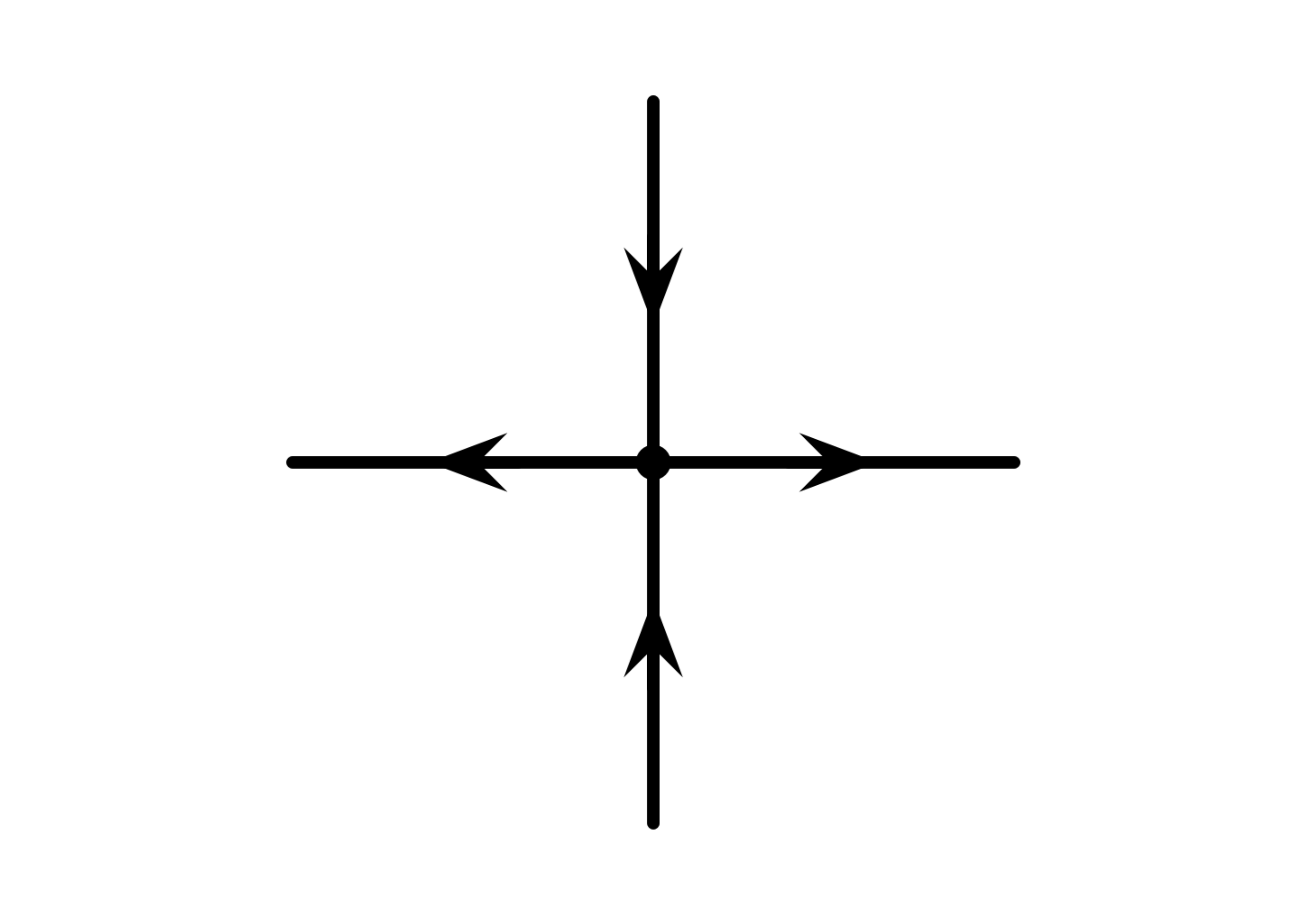}
	\subcaption*{5}
\end{subfigure}
\begin{subfigure}[t]{0.15\textwidth}
	\centering
	\includegraphics[width=1\textwidth]{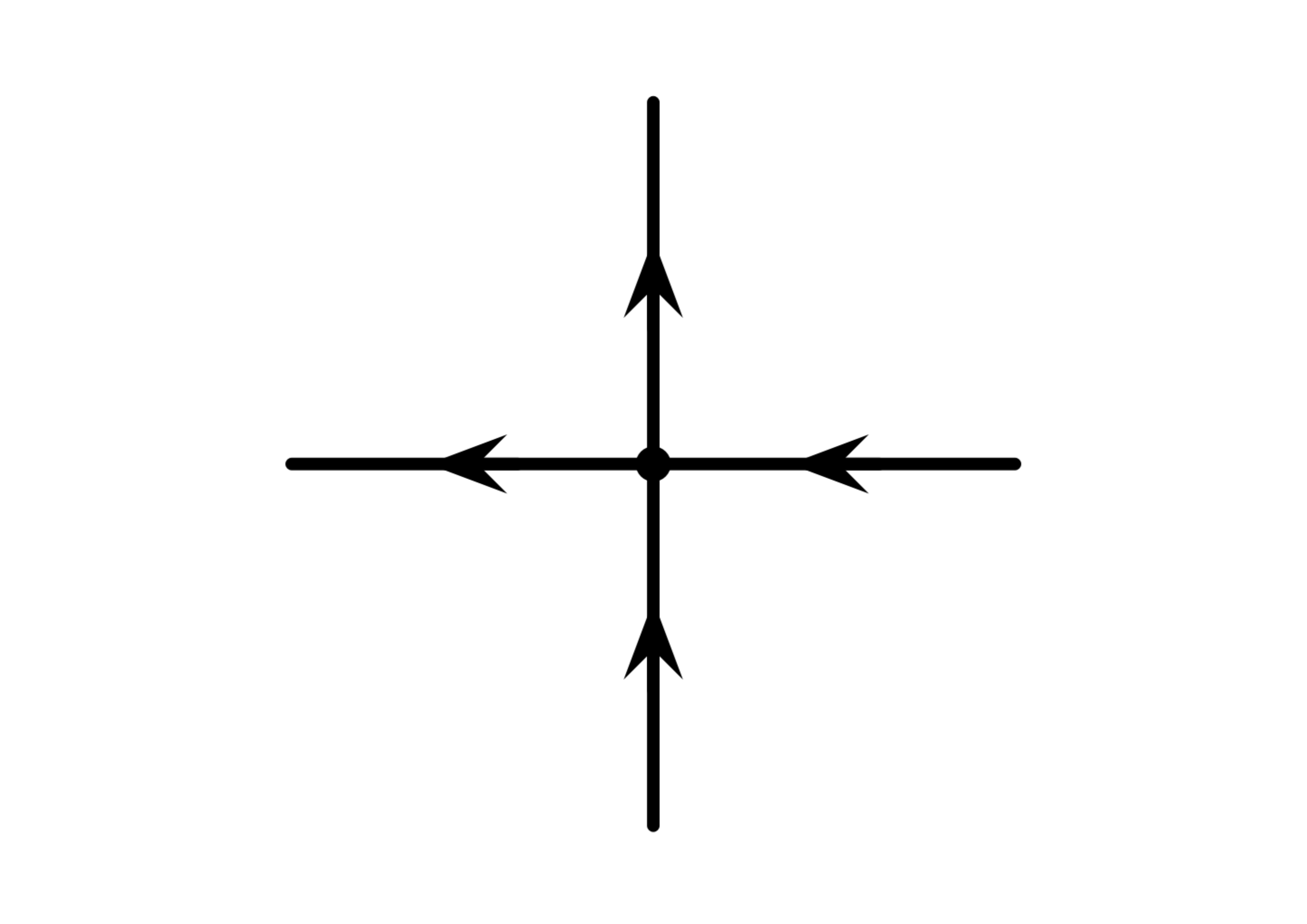}
	\subcaption*{6}
\end{subfigure}
\caption{Valid configurations of the six-vertex model}
\label{six-type}
\end{figure}

On a 4-regular graph $G$, 
the partition function of the six-vertex model is
\begin{align*}
	Z(G; \omega_1, \omega_2, \cdots, \omega_6)=\displaystyle\sum_{\sigma}\prod_{i=1}^6\omega_i^{n_i},
\end{align*} 
where $\sigma$ is the set of the orientation of $G$ such that the  incident edges of each vertex are 2-in-2-out,
and $n_i$ is the number of vertices in type $i$ ($1\leq i\leq 6$).
Since the six-vertex model was introduced by Linus Pauling in 1935 to describe the properties of ice \cite{pauling/L/1935}, 
it has attracted considerable attentions in physics, chemistry and mathematics.

Note that the partition function of the six-vertex model can be considered as a sum-of-product computation.
Thus it is a counting problem naturally. 
For example, if all the weight are 1, then the value of the partition function is the number of the Eulerian orientations of the underlying graph.
In the 1990s, Zeilberger \cite{Zeilberger/1996}, Kuperberg \cite{kuperberg/1996} showed the connection of the alternating sign matrix and the six-vertex model. 
It is also known that the six-vertex model are related to the Tutte polynomial \cite{Tutte/W.thomas/1954,Michel/1988,JA.Ellis/Criel.M/2011}. 
In general, the computational complexity of counting problems was studied in two classical frameworks:
Graph homomorphisms(GH), 
Counting Constraint Satisfication Problems(\#CSP).
Holant problems is a new framework which is expressive enough to contain GH and \#CSP as special cases \cite{Valiant/2008,J.Cai/P.Lu/M.Xia/2009}. 
A Holant instance is a graph equipped with some local constraint functions.
The six-vertex model is a Holant problem and the  constraint function  is determined by the parameters $\omega_1, \omega_2, \cdots, \omega_6$.
In particular, we note that the six-vertex model can not be expressed by GH and \#CSP.
A series theorems of complexity classifications were built for 
GH and \#CSP for exact computation \cite{GoldbergGJT/2010, BulatovDGJJR/2012, A.Bulatov/2013,
	H.Guo/T.Williams/2013,
	M.Dyer/D.Richerby/2013,
	J.Cai/X.Chen/P.Lu/2013,
	J.Cai/H.Guo/T.Williams/2016,
	S.Huang/P.Lu/2016,
	J.Cai/X.Chen/2017,
	J.Cai/Z.Fu/S.Shao/2017,
	J.Lin/H.Wang/2018,
	M.Backens/2018} and approximation complexity \cite{M.Jerrum/A.Sinclair/1989,
	A.Sinclair/1992,
	D.Randall/P.Tetali/1998,
	M.Luby/D.Randall/A.Sinclair/2001,
	L.Goldberg/R.Martin/M.Paterson/2004,
	M.Jerrum/A.Sinclair/E.Vigoda/2004,
	Goldberg/Mark/2012}.
But the results are very limited for Holant problems, in particular for approximation complexity.

The six-vertex model is an important base case to study Holant problems with asymmetric constraint functions.
And  the computational complexity of the six-vertex model was investigated in the context of Holant problems.
For the exact computation, there are the complexity classification of the six-vertex model on general graphs and planar graphs respectively \cite{J.Cai/Z.Fu/S.Shao/2017, J.Cai/Z.Fu/M.Xia/2018}. 
They proved that the six-vertex model can be divided into three categories: (1) tractable on general graphs; (2) tractable on planar graphs but 
\#P-hard on general graphs; (3) \#P-hard even on planar graphs.
For  the approximate complexity,
Mihail and Winkler  gave the FPRAS for the
unweighted case, i.e., counting the number of the Eulerian orientations of the underlying graph \cite{M.Mihail/P.Winkler/1996}.
For the weighted case, 
Cai etc.  showed that the approximate complexity of the six-vertex model is dramatically different on the two sides separated by the phase transition 
threshold from physics \cite{J.Cai/T.Liu/P.Lu/2019}. They showed that there is no FPRAS for the six-vertex model in anti-ferroelectric phase if RP$\neq$NP.
But for the remaining area, they just gave FPRAS by MCMC when the constraint function is ``windable'' and left a gap to the phase transition threshold.

Winding is proposed by McQuillan to design the canonical paths in a systematic way in \cite{C.McQuillan/2013}. 
Canonical path is an important tool to prove rapid mixing of the Markov Chain when designing MCMC. 
However, it is a difficult task to design the canonical paths that provide the routing as low congestion as possible for all state pairs of Markov Chain. %
McQuillan  reduced the task of designing canonical paths to solving a set of linear equations, which makes design of canonical paths easier. %
In details, for a Holant problem, if all the constraint functions are ``windable'', then we can find the canonical paths automatically.
But proving windable property concisely for the constraint functions of counting problems is still a difficult problem.
Therefore, Huang etc.  simplified the conditions to check if a function is windable in \cite{L.Huang/P.Lu/C.Zhang/2016}. 
From then on, the application of windability has been further extended.
The FPRAS in \cite{J.Cai/T.Liu/P.Lu/2019,J.Cai/T.Liu/P.Lu/J.Yu/2020} are all for Holant problems with windable constraint functions.

In the present paper, we give an FPRAS for the six-vertex model with unwindable constraint functions. 
We design a Markov Chain  depending on a circuit decomposition instead of the directed-loop algorithm in \cite{J.Cai/T.Liu/P.Lu/2019}, and
use path coupling to prove the rapid mixing of the Markov Chain instead of canonical paths in \cite{J.Cai/T.Liu/P.Lu/2019}. 
In details, we consider the six-vertex model with the parameters $\omega_1=1$, $\omega_3=\omega_4=b>0$ and $\omega_2=\omega_5=\omega_6=0$,
which produce an unwindable constraint function.
According to the constraint function,  the underlying graph can be decomposed  into  a set of circuits $\mathcal{C}$, and for each  circuit in $\mathcal{C}$
we can define assignments   which one-to-one corresponds to the valid configurations of the six-vertex model.
Then we define a Markov Chain on the state space consisted of the assignments of $\mathcal{C}$ by the Glauber dynamics.
We proved the Markov Chain can be mixing in
$O(n\log \frac{n}{\varepsilon })$
for $b \le \frac{1}{{\delta}}$, where both $n$ and $\delta$ are less than the number of the vertices of the underlying graph.
For some technical difficulties, we just prove the rapid mixing of the Markov Chain when the underlying graph is two-by-two-intersection free.
We believe this condition is unnecessary and will resolve it in the future.

\noindent The paper is organized as follows:
\begin{itemize}
\item In Section 2, we formalize our model and prove that its  constraint function is unwindable.
\item In Section 3, we design the Markov Chain whose state space $\Omega$ is the set of all valid configurations 
to sample
from the six-vertex model, 
which is closely related to computing the partition function \cite{M.Jerrum/L.Valiant/V.Vazirani/1986, A.Sinclair/1993}. 
\item In Section 4, we show that Markov Chain rapidly converges to its stationary distribution by path coupling, i.e., we find an efficient method to sample from the six-vertex model, and as an FPRAS to approximate the partition function.
\end{itemize}

\section{Preliminaries}
\label{sec:typesetting-Preliminaries}

\subsection{The six-vertex model}
\label{subsec:typesetting-The six-vertex model}
The six-vertex model is naturally expressed as a Holant problem, which we define as follows.
A function $f:{\left\{ {0,1} \right\}^k} \to \mathbb{C}$ is called a constraint function of arity $k$. In this paper we restrict $f$ to take non-negative values in ${\mathbb{Q}^ + }$. 
Fix a set of constraint functions $\mathcal{F}$. A function grid  $\Gamma  = \left( {G,\xi } \right)$ is a tuple, where $G = \left( {V,E} \right)$, $\xi$ labels each $v \in V$ with a function ${f_v} \in \mathcal{F}$ of arity deg($v$), and the incident edges $E\left( v \right)$ at $v$ are identified as input variables to ${f_v}$, also labeled by $ \xi $. 
Every assignment $\sigma :E \to \left\{ {0,1} \right\}$  gives an evaluation $\prod\nolimits_{v \in V} {{f_v}\left( {\sigma {|_{E\left( v \right)}}} \right)} $, where ${\sigma {|_{E\left( v \right)}}}$ denotes the restriction of $\sigma $ to ${E\left( v \right)}$.
The problem Holant($\mathcal{F}$) on an instance $\Gamma $ is to compute 
\begin{align*}
\text{Holant}\left( {\Gamma ;{\cal F}} \right) = \sum\nolimits_{\sigma :E \to \left\{ {0,1} \right\}} {\prod\nolimits_{v \in V} {{f_v}} } \left( {\sigma {|_{E\left( v \right)}}} \right).
\end{align*}
We use Holant $\left( {\mathcal{F}|\mathcal{G}} \right)$ for Holant problems over function grids with a bipartite graph $\left( {U,V,E} \right)$ where each vertex in $U$ (or $V$) is assigned a function in $\mathcal{F}$ (or $\mathcal{G}$, respectively).

To write the six-vertex model on a 4-regular graph $G=(V, E)$ as a Holant problem, consider the edge-vertex
incidence graph $G'=(U_E, U_V, E')$ of $G$. We model the orientation of an edge in $G$ by putting the
Disequality function $(\neq_2)$ (which outputs 1 on inputs 01, 10 and outputs 0 on 00, 11) on $U_E$ in $G'$.
We say an orientation on edge $e=(w, v)\in E$ is going out $w$ and into $v$ in $G$ if the edge $(u_e, u_w)$ in $G'$
takes value 1 (and ($u_e, u_v$)$\in E'$ takes the  value 0).
In the following, we consider $G'$ as the underlying graph of the six-vertex model.
We write an 4-ary function $f(x_1, x_2, x_3, x_4)$ as a  matrix $M(f)=M_{x_1x_2, x_3x_4}(f)=
\left( {\begin{array}{*{20}{c}}
		{f_{0000}}&{f_{0001}}&{f_{0010}}&{f_{0011}}\\
		{f_{0100}}&{f_{0101}}&{f_{0110}}&{f_{0111}}\\
		{f_{1000}}&{f_{1001}}&{f_{1010}}&{f_{1011}}\\
		{f_{1100}}&{f_{1101}}&{f_{1110}}&{f_{1111}}
\end{array}} \right)$.
Then the constraint function for  the six-vertex model can be expressed as a function with 
$M(f)=
\left( {\begin{array}{*{20}{c}}
		{0}&{0}&{0}&{w_1}\\
		{0}&{w_2}&{w_3}&{0}\\
		{0}&{w_4}&{w_5}&{0}\\
		{w_6}&{0}&{0}&{0}
\end{array}} \right)$.
If $w_1=w_6=a, w_3=w_4=b, w_2=\omega_5=c$, then we say the six-vertex model has arrow reversal symmetry.
In \cite{J.Cai/T.Liu/P.Lu/2019}, they proved there is no FPRAS if $a+b>c$ (anti-ferroelectric) unless RP=NP, and gave an FRPAS for $a^2+b^2<c^2$.
In the present paper, we consider the six-vertex model with the constraint function 
\begin{align}\label{f}
	M(f^*)=\left( {\begin{array}{*{20}{c}}
			0&0&0&1\\
			0&0&b&0\\
			0&b&0&0\\
			0&0&0&0
	\end{array}}\right).	
\end{align}
Note that $f^*$ has no arrow reversal symmetry. Moreover, the exact computation of the six-vertex model with $f^*$ is \#P-hard \cite{J.Cai/Z.Fu/M.Xia/2018}.

\subsection{Windable constraint function}

\begin{definition}\label{windable}
	(Windable). For any finite set J and any configuration $x \in {\left\{ {0,1} \right\}^J}$ define ${{\cal M}_x}$ to be the set of partitions of $\left\{ {i|{x_i} = 1} \right\}$ into pairs and 
	at most one singleton. A function $f:{\left\{ {0,1} \right\}^J} \to \mathbb{Q}^+$ is windable if there exist values $B\left( {x,y,M} \right) \ge 0$ for all $x,y \in {\{ 0,1\} ^J}$ and all $M \in {{\cal M}_{x \oplus y}}$ satisfying:
	\begin{itemize}
	\item $f\left( x \right)f\left( y \right) = \sum\nolimits_{M \in {{{\cal M}'}_{x \oplus y}}} {B(x,y,M)} $  for all $x,y \in {\{ 0,1\} ^J}$.
	\item $B\left( {x,y,M} \right) = B\left( {x \oplus S,y \oplus S,M} \right)$ for all $x,y \in {\{ 0,1\} ^J}$ and $S \in M \in {{\cal M}_{x \oplus y}}$.
	\end{itemize}
	Here $x \oplus S$ denotes the vector obtained by changing ${x_i}$ to $1 - {x_i}$ for the one or two elements i in S.
\end{definition}

Note that for the six-vertex model with arrow reversal symmetry, the function is windable if $a^2+b^2<c^2$, i.e., \cite{J.Cai/T.Liu/P.Lu/2019} gave an FPRAS for the six-vertex model when the function is windable. 
We will prove that the function in \cref{f} is unwindable in the following lemma, i.e., we will give an FPRAS for the six-vertex model without windability.

\begin{lemma}\label{unwindable}
	The constraint function ${f^*}$  with $b \ne 0$ in (\ref{f}) is unwindable.
\end{lemma}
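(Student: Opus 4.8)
The plan is to argue by contradiction, exploiting how sparse the support of $f^*$ is. First I would read off from $M(f^*)$ in \cref{f} that $f^*(x)\neq 0$ precisely for $x\in\{0011,0110,1001\}$ (the nonzero entries being $f^*_{0011}=1$ and $f^*_{0110}=f^*_{1001}=b$), so in particular every $x$ with $f^*(x)\neq 0$ has exactly two coordinates equal to $1$. Assume, for contradiction, that $f^*$ is windable, witnessed by nonnegative values $B(x,y,M)$ as in \cref{windable}.

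The crucial choice is $x=0110$ and $y=1001$, both in the support, so that $f^*(x)f^*(y)=b^2\neq 0$. Their bitwise XOR is $x\oplus y=1111$, and since $\{1,2,3,4\}$ has even cardinality there is no singleton part available, so $\mathcal{M}_{1111}$ consists of exactly the three perfect matchings $M_1=\{\{1,2\},\{3,4\}\}$, $M_2=\{\{1,3\},\{2,4\}\}$, $M_3=\{\{1,4\},\{2,3\}\}$. The first windability identity then reads $b^2 = B(x,y,M_1)+B(x,y,M_2)+B(x,y,M_3)$.

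Next I would kill each of these three terms. The key observation: for a matching $M\in\mathcal{M}_{x\oplus y}$ and a pair $S\in M$, the invariance condition of \cref{windable} gives $B(x,y,M)=B(x\oplus S,\,y\oplus S,\,M)$; and if $x\oplus S$ lies outside the support of $f^*$, then $f^*(x\oplus S)f^*(y\oplus S)=0$, which, being a sum of nonnegative $B$-values one of which is $B(x\oplus S,y\oplus S,M)$, forces $B(x\oplus S,y\oplus S,M)=0$, hence $B(x,y,M)=0$. It then suffices to exhibit for each $M_i$ one pair $S\in M_i$ with $x\oplus S$ outside $\{0011,0110,1001\}$: flipping the pair $\{1,2\}$ in $x=0110$ yields $1010$, flipping $\{1,3\}$ yields $1100$, and flipping $\{1,4\}$ yields $1111$, and none of $1010,1100,1111$ is in the support. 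Hence $B(x,y,M_i)=0$ for $i=1,2,3$, and the identity above gives $b^2=0$, contradicting $b\neq 0$. This proves the lemma.

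Since all the arithmetic is trivial, the only real care needed is bookkeeping: pinning down the coordinate-to-index correspondence implied by writing $M(f^*)=M_{x_1x_2,x_3x_4}(f^*)$, making sure $\mathcal{M}_{1111}$ really contains only the three perfect matchings (the ``at most one singleton'' clause of \cref{windable} matters only for odd-weight difference patterns, not here), and phrasing the implication ``one bad $S$ kills $B(x,y,M)$'' cleanly. I expect the step most prone to a slip is verifying that the three flipped strings $1010,1100,1111$ each leave the support of $f^*$; once that is checked, every other step is forced.
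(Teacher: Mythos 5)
Your proof is correct and follows essentially the same route as the paper's: the same witness pair $x=0110$, $y=1001$, the same three perfect matchings of $\{1,2,3,4\}$, and the same flips ($\{1,2\}\mapsto 1010$, $\{1,3\}\mapsto 1100$, $\{1,4\}\mapsto 1111$) to show each $B(x,y,M_i)$ vanishes, yielding the contradiction $b^2=0$. Your explicit remark that a sum of nonnegative terms equal to zero forces each term to be zero is exactly the step the paper uses implicitly.
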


\begin{proof}
	Suppose that the constraint function ${f^*}$ is windable.
	Then for $x$=0110 and $y$=1001, we have 
	\[{{\cal M}_{x \oplus y}} = \left\{ {{M_1}:\left\{ {\left( {x_1,x_2} \right),\left( {x_3,x_4} \right)} \right\},{M_2}:
		\left\{ {\left( {x_1,x_3} \right),\left( {x_2,x_4} \right)} \right\},{M_3}:\left\{ {\left( {x_1, x_4} \right),\left( {x_2,x_3} \right)} \right\}} \right\},\] 
	and there exist $B\left( {x,y,M} \right) \ge 0$ 
	such that ${f^*}(0110) \cdot {f^*}(1001) = \sum\nolimits_{M \in {{{\cal M}'}_{x \oplus y}}} {B(x,y,M)}  = {b^2}$
	by \cref{windable}.
	Moreover, we have 
	$B\left( {x,y,M} \right) = B\left( {x \oplus S,y \oplus S,M} \right)$ for $S = \left\{ {x_1, x_2} \right\}$ by \cref{windable},  i.e.,
	\begin{align*}
		B\left( {0110,1001,{M_1}} \right) = B\left( {0110 \oplus \left\{ {x_1,x_2} \right\},1001 \oplus \left\{ {x_1,x_2} \right\},{M_1}} \right) = B\left( {1010,0101,{M_1}} \right).
	\end{align*}
	Since ${f^*}(1010) \cdot {f^*}(0101) = \sum\nolimits_{M \in {{{\cal M}}_{x \oplus y}}} {B(x,y,M)}  = 0$, we have 
	\begin{align}\label{B1}
		B\left( {0110,1001,{M_1}} \right) = B\left( {1010,0101,{M_1}} \right) = 0,
	\end{align}
	Similarly, for  $S = \left\{ {x_1,x_3} \right\}$ and $S = \left\{ {x_1,x_4} \right\}$, 
	we have
	\begin{align}\label{B2}
		B\left( {0110,1001,{M_2}} \right) = B\left( {1100,0011,{M_2}} \right) = 0
	\end{align}
	and
	\begin{align}\label{B3}
		B\left( {0110,1001,{M_3}} \right) = B\left( {1111,0000,{M_3}} \right) = 0.
	\end{align}
	The  equations (\ref{B1}),(\ref{B2}) and (\ref{B3}) imply that  ${f^*}(0110) \cdot {f^*}(1001) = {b^2} = 0$. This  contradicts that $b\neq 0$.
\end{proof}

\section{Machinery}
\label{sec:typesetting-Machinery}
A valid configuration of the six-vertex model is an assignment to all edges which produces a non-zero evaluation. Let $\Omega$ be the set of all valid configurations.  
In this section, we design a Markov Chain for the six-vertex model with the state space $\Omega$. 
And then we introduce coupling to prove the rapid mixing of the Markov Chain.

\subsection{Markov chain of The six-vertex model}
\label{subsec:typesetting-Markov chain of The six-vertex model}
Dividing the four variables of the constraint function $f^*$ into two pairs $(x_1, x_3)$ and $(x_2, x_4)$,
note that the constraint function $f^*$ forces the variables in the same pair take opposite values in each valid configuration,
i.e. the corresponding edges have to be incoming and outgoing pairwisely.
Inspired by this fact, we define a circuit decomposition for the underlying
graph(See \cref{fig:label} for an example) 
\begin{itemize}
	\item Taking any vertex as the initial point, the path crosses the vertex of degree 2 directly;
	\item for the vertices of degree 4, the path travels the edges corresponding to the variables in the same pair through the vertices of degree 4, i.e., 
	if a path enters a vertex in the edge $x_1$ or $x_3$ ($x_2$ or $x_4$), then it leaves the vertex in the edge $x_3$ or $x_1$ ($x_4$ or $x_2$) respectively;
	\item  when the path travels back to the initial point, it forms a circuit. 
	Delete this circuit from the  graph and repeat the process until the graph is empty.
\end{itemize}

\begin{figure}[ht]\label{circuit}		
	\centering
	\includegraphics[scale=0.3]{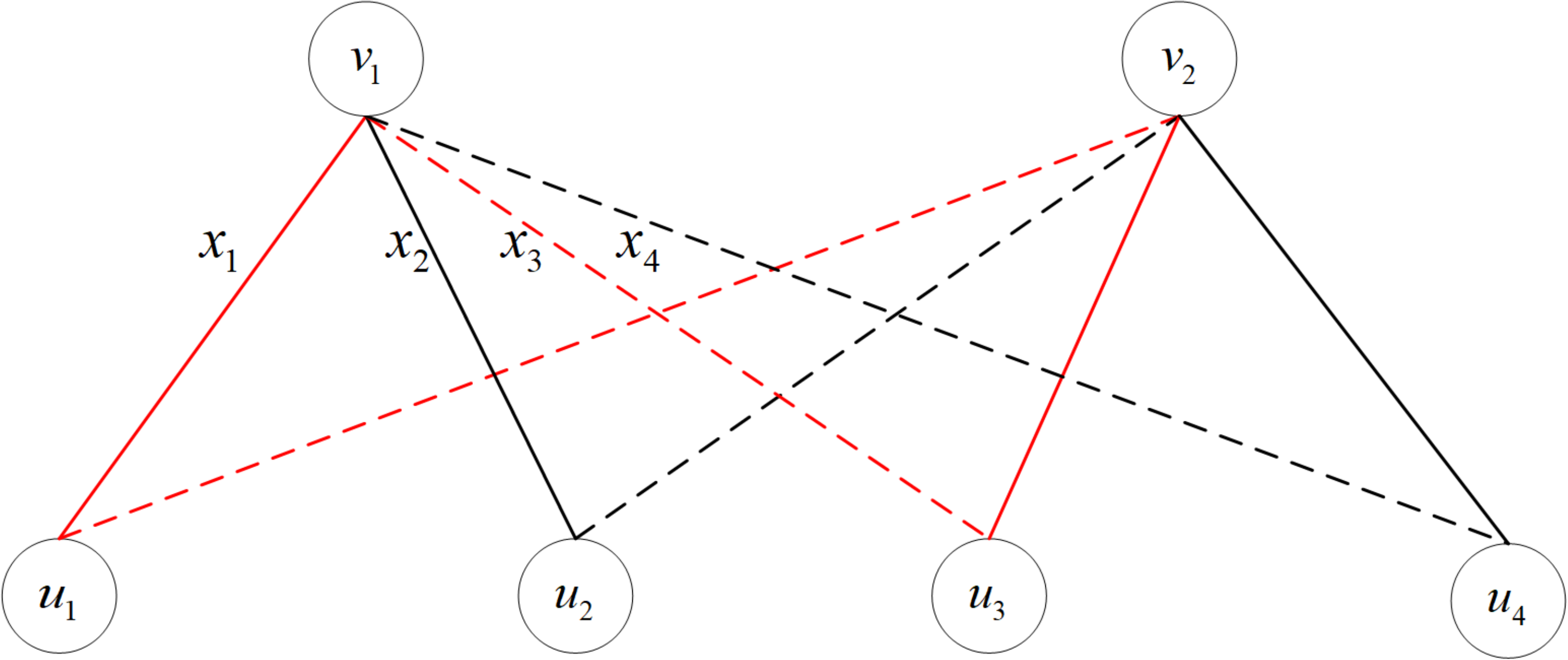}
	\caption{An example of circuit decomposition: the red edges and the black edges form two circuits 
		${C_1}$ $({v_1} \to {u_1} \to {v_2} \to {u_3} \to {v_1})$ and ${C_2}$  $({v_2} \to {u_2} \to {v_1} \to {u_4} \to {v_2})$.
		In the same circuit, the dashed edges and the solid edges take opposite assignments.
	} 
	\label{fig:label}	
\end{figure}

We denote the circuit decomposition by $\mathcal{C}$ in the following and assume that $n=|\mathcal{C}|$.
Now we redefine the valid configuration of the six-vertex model in terms of the circuit decomposition $\mathcal{C}$ before constructing the Markov Chain.
Note that the constraint function $f^*$ forces the edges in the same circuit in $\mathcal{C}$ take the values  $\{0,1\}$ alternately.
For each circuit ${C_i} $, we set an initial edge $e_i$
which corresponds to the variable $x_1$ or $x_2$ (note that each circuit in $\mathcal{C}$ travels at least one edge corresponding to the variable $x_1$ or $x_2$). 
The assignments of all the edges in $C_i$ are determined by the assignment of $e_i$ in a valid configuration.
Thus we can define the assignment of the circuit $C_i$ as the value of the initial edge $e_i$, i.e., a valid configuration assigns a value for each circuit.
Conversely, if we assign a value for each $C_i\in\mathcal{C}$, then it gives a configuration for the six-vertex model.
In the following, we set $\sigma$ is a 0-1 string $(C_1\ C_2\ \dots \ C_{|\mathcal{C}|})$ where each $C_i\in\mathcal{C}$ takes the value $0$ or $1$ and denote it by $\sigma (C_i)=0$ or $1$. 
We say that $\sigma$ is a configuration since it assigns a value for each edge in fact. 
If two circuits ${C_i}$ and ${C_j}$  share a common vertex, then we say that ${C_i}$ is a neighbor of  ${C_j}$.
Note that $f^*(1100)=0$. Thus for two adjacent circuits $C_i$, $C_j$, there is at most one which takes the value 1 in a valid configuration. 
We denote the set of neighbors of ${C_i}$ by $\Gamma ({C_i})$ and set
\begin{align*} 
\delta _{C_i}=|\Gamma(C_i)| \ \rm{and} \  \delta=\displaystyle\max_{C_i\in\mathcal{C}}\delta_{C_i}.
\end{align*} 

Now we define the Markov Chain $\{Z_t\}$ on the state space $\Omega $ by the similar idea with \cite{Luby/Michael/Vigoda/1999}. For any configuration $\sigma\in\Omega$,
the transition probability matrix $P$ of $\{Z_t\}$ is as follows:
\begin{itemize}
	\item Choose an arbitrary circuit ${C_i} \in \mathcal{C}$ at random.
	\item Let 
	\begin{align*}
		\sigma ' = \left\{ {\begin{array}{*{20}{c}}
				{{\rm{only \ move}}\;{C_i}\;{\rm{to}}\;1\;{\rm{in}}\;\sigma \  {\rm{ with \ the \  probability}} \ \frac{{{b^{{\delta_{{C_i}}}}}}}{{1 + {b^{{\delta _{{C_i}}}}}}}}\\
				{{\rm{only \ move}}\;{C_i}\;{\rm{to}}\;0\;{\rm{in}}\;\sigma \  {\rm{ with \ the \ probability}} \ \frac{1}{{1 + {b^{{\delta _{{C_i}}}}}}}}
		\end{array}} \right.
	\end{align*}
	\item If $\sigma '$ is a valid configuration, move to state $\sigma'$, otherwise remain at state $\sigma $.
\end{itemize}
\begin{proposition}\label{P}
	The transition probability matrix P of $\{Z_t\}$ has the following properties: 
	\begin{itemize}
	\item aperiodicity: gcd$\left\{ {t:P_{ii}^t > 0} \right\} = 1$ for all $\sigma_i \in \Omega $;
	\item irreducibility: there exists  $t$ such that there is a positive probability of going from state $\sigma_i$ to state $\sigma_j$ after $t$ steps, $P_{ij}^t > 0$, for all $\sigma_i, \sigma_j \in \Omega $.
	\end{itemize}
\end{proposition}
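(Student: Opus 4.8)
The plan is to derive both properties directly from the description of the chain, using the characterization of $\Omega$ recorded just above the statement: an assignment $\sigma\colon\mathcal C\to\{0,1\}$ is a valid configuration, i.e.\ $\sigma\in\Omega$, precisely when no two adjacent circuits are both assigned the value $1$ (the ``only if'' direction is the remark $f^*(1100)=0$; the ``if'' direction comes from the way the circuit decomposition already forces the remaining zero entries of $f^*$). Two elementary consequences will be used throughout: the all-zero assignment $\mathbf 0$ lies in $\Omega$, and $\Omega$ is monotone in the sense that switching some $1$'s of a $\sigma\in\Omega$ to $0$ keeps the assignment in $\Omega$ (its set of $1$-circuits only shrinks, hence stays pairwise non-adjacent), and dually the $1$'s of any $\tau\in\Omega$ can be turned on one circuit at a time without ever leaving $\Omega$.

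\emph{Aperiodicity.} I will show every state carries a positive-probability self-loop. Fix $\sigma\in\Omega$ and pick any circuit $C_i\in\mathcal C$. Once $C_i$ is chosen, the proposed value is $0$ with probability $1/(1+b^{\delta_{C_i}})$ and $1$ with probability $b^{\delta_{C_i}}/(1+b^{\delta_{C_i}})$, both strictly positive since $b>0$. Whichever of the two proposals coincides with $\sigma(C_i)$ yields $\sigma'=\sigma\in\Omega$, which is accepted, so the chain stays at $\sigma$. Hence $P_{\sigma\sigma}>0$, so $1\in\{t:P_{\sigma\sigma}^t>0\}$ and $\gcd\{t:P_{\sigma\sigma}^t>0\}=1$.

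\emph{Irreducibility.} I will connect every pair of states through $\mathbf 0$. Given $\sigma\in\Omega$, list the circuits with $\sigma$-value $1$ as $C_{i_1},\dots,C_{i_s}$ in any order and let $\rho_k$ be obtained from $\sigma$ by resetting $C_{i_1},\dots,C_{i_k}$ to $0$. By monotonicity each $\rho_k\in\Omega$, and the step $\rho_{k-1}\to\rho_k$ has probability at least $(1/n)\cdot\bigl(1/(1+b^{\delta_{C_{i_k}}})\bigr)>0$: choose $C_{i_k}$, propose $0$, and accept since $\rho_k$ is valid. Thus $\sigma$ reaches $\rho_s=\mathbf 0$ in $s\le n$ steps with positive probability. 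Dually, if $\tau\in\Omega$ has $1$-circuits $C_{j_1},\dots,C_{j_r}$, turning them on from $\mathbf 0$ one at a time produces configurations $\mu_0=\mathbf 0,\mu_1,\dots,\mu_r=\tau$, all in $\Omega$ by monotonicity, with $\mu_{k-1}\to\mu_k$ of probability at least $(1/n)\cdot\bigl(b^{\delta_{C_{j_k}}}/(1+b^{\delta_{C_{j_k}}})\bigr)>0$, again because $b>0$. Splicing the two walks gives $P_{\sigma\tau}^{\,s+r}>0$ with $s+r\le 2n$; appending self-loops at $\tau$ (available by the previous paragraph, and harmless when $s+r=2n$) upgrades this to $P_{\sigma\tau}^{\,2n}>0$ for all $\sigma,\tau\in\Omega$, so a single exponent works for every pair, the case $\sigma=\tau=\mathbf 0$ included. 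This is irreducibility.

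I do not expect a genuine obstacle here. The whole argument rests on the structural characterization of $\Omega$ above together with the trivial monotonicity it entails, and the strict positivity of every transition used comes from the standing hypothesis $b>0$. The only point deserving a line of verification is that none of the intermediate configurations $\rho_k,\mu_k$ leaves $\Omega$, which is exactly the statement that ``no two adjacent circuits both $1$'' is preserved when $1$-circuits are deleted, respectively added within an already conflict-free family.
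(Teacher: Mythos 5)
Your proposal is correct and follows essentially the same route as the paper: aperiodicity via the positive-probability self-loop (proposing the current value of the chosen circuit), and irreducibility by routing every state through the all-zero configuration, switching $1$-circuits off one at a time and back on. Your version merely adds detail the paper leaves implicit (the monotonicity of validity under deleting $1$'s, and the self-loop padding to get a single uniform exponent $t$), so there is nothing substantively different to flag.
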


\begin{proof}
	For each state $\sigma_i\in\Omega$, there is a positive probability such that it stays the current state. Thus we have $gcd\left\{ {t:{P_{ii}} > 0} \right\} = 1$
	and aperiodicity is proved. 
	
	For each state $\sigma\in\Omega$, by moving the circuits which take the value 1 in $\sigma$ to 0 step by step, 
	there is a path to the state $\sigma_0$ in which all the circuits take the value 0  and vice versa, i.e., 
	for any two states, there is at least one path connected them by crossing $\sigma_0$. Irreducibility is proved.
\end{proof}

Based on the aperiodicity and irreducibility, we know that the chain $\{{Z_t}\}$ has a unique limiting distribution which is called the stationary distribution $\pi $, i.e.
\begin{align*}
	\mathop {\lim }\limits_{t \to \infty } P_{ij}^t = {\pi _j}, \ \text{for all} \ \sigma_i,\sigma_j \in \Omega.	
\end{align*}

The distribution of the partition function $\mu$ is as follows
\begin{align*}
	\mu (\sigma ) = \frac{{\prod\limits_{{C_i}:\sigma ({C_i}) = 1} {{b^{{\delta _{{C_i}}}}}} }}{{\sum\limits_{\sigma  \in \Omega } {\prod\limits_{{C_i}:\sigma ({C_i}) = 1} {{b^{{\delta _{{C_i}}}}}} } }}.
\end{align*}
Note that ${\sum\limits_{\sigma  \in \Omega } {\prod\limits_{{C_i}:\sigma ({C_i}) = 1} {{b^{{\delta _{{C_i}}}}}} } }$
is the partition function of the six-vertex model with the constraint function $f^*$.
Moreover, we emphasize that  the distribution $\mu$ is the unique stationary distribution of $\{Z_t\}$, because $\mu$ satisfies the detailed balance conditions 
\[{\mu _i}P_{ij}^t = {\mu _j}P_{ji}^t\] for all $\sigma_i, \sigma_j \in \Omega$.
Our goal is to bound the time when the chain is close to the stationary distribution, i.e., we are interested in the mixing time $\tau $:
\begin{align*}
	\tau (\varepsilon ) = \mathop {\max }\limits_i \min \{ t:{\Delta _i}(t') \le \varepsilon \ \text{for all} \ t' \ge t\},	
\end{align*}
where ${\Delta _i}(t)$ is the total variation distance between $P_i^t$ and $\pi$, i.e., 
\begin{align*}
	{\Delta _i}(t) = \frac{1}{2}\sum\limits_{\sigma_j \in \Omega } {\left| {P_{ij}^t - {\pi _j}} \right|}.
\end{align*}

\subsection{Coupling}
\label{subsec:typesetting-Coupling}

We use coupling to prove the rapid mixing of $\{Z_t\}$.
Coupling constructs a stochastic process $\left( {{X_t},{Y_t}} \right)$ such that 
$\{X_t\}$ and  $\{Y_t\}$ are copies of the $\{Z_t\}$ and if ${X_t} = {Y_t}$, then ${X_{t + 1}} = {Y_{t + 1}}$. To reduce the difficulty of constructing the coupling, we use path coupling in \cite{R.Bubley/M.Dyer/1997} as a tool in the following analysis. 

Firstly, we define the notion of neighbors and paths in $\Omega $. 
For a pair of states $\sigma $, ${\sigma _{{C_i}}} \in \Omega $, if $\sigma(C_i)=0$, 
$\sigma_{C_i}(C_i)=1$ and $\sigma(C_j)=\sigma_{C_i}(C_j)$ for $j\neq i$, then $\sigma $ and ${\sigma _{{C_i}}}$ are neighbors and denote it by $\sigma  \sim {\sigma _{{C_i}}}$. 
We call $\tau  = \left( {{\tau _0}, {\tau _1}, \cdots ,{\tau _k}} \right)$ a simple path if all ${\tau _i}$ are distinct and ${\tau _0} \sim {\tau _1} \sim \cdots  \sim {\tau _k}$. And for each $\sigma, \eta \in \Omega$, we denote the set of the simple paths connected them by
\begin{align*}
	\rho(\sigma ,\eta ) = \{ \tau :\sigma  = {\tau _0}, \ \eta  = {\tau _k},\tau \ \text{is a simple path} \}. 
\end{align*}
We assume that $X_t = \sigma$ and $Y_t = \sigma _{{C_i}}$, and denoted the next state by $X_{t+1}=\sigma '$ and  $Y_{t+1}={\sigma '_{{C_i}}}$ respectively. 
Note that both Markov Chains $\{X_t\}$, $\{Y_t\}$ use the same random walk at each step in path coupling, even if the walk cannot produce a valid state (if the produced state is not valid, the chain will stay the current state). 
The following lemma about path coupling follows from \cite{M.Dyer/C.Greenhill/2000MC}. 
\begin{lemma}\label{coupling}
	Let $\Phi $ be an integer-valued metric defined on $\Omega  \times \Omega $ which takes values in $\left\{ {0, \ldots ,D} \right\}$ and for all $\sigma ,\eta  \in \Omega $, then
	there exists $\tau\in \rho(\sigma, \eta)$ such that
	\begin{align*}
		\Phi \left( {\sigma ,\eta } \right) = \sum\limits_i {\Phi \left( {{\tau _i},{\tau _{i + 1}}} \right)}. 
	\end{align*}
	Suppose there exists $\beta  < 1$ and a coupling of the Markov Chain such that for all $\sigma ,{\sigma _{{C_i}}} \in \Omega $:
	\begin{align*}
		E\left[ {\Phi \left( {\sigma ',{{\sigma '}_{{C_i}}}} \right)} \right] \le \beta  \cdot \Phi \left( {\sigma ,{\sigma _{{C_i}}}} \right).
	\end{align*}
	Then the mixing time is 
	\begin{align*}
		\tau \left( \varepsilon  \right) \le \frac{{\log \left( {D{\varepsilon ^{ - 1}}} \right)}}{{1 - \beta }}.
	\end{align*}
\end{lemma}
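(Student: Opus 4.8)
The final statement to prove is \cref{coupling}, the path coupling lemma. The plan is to follow the standard path-coupling reduction of Bubley and Dyer, adapted to the integer-valued metric $\Phi$ on $\Omega\times\Omega$.

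\medskip

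\noindent\textbf{Proof proposal.} First I would observe that the hypothesis only gives a one-step contraction for \emph{neighboring} pairs $\sigma\sim\sigma_{C_i}$, so the first step is to lift this to arbitrary pairs $\sigma,\eta\in\Omega$. Given $\sigma,\eta$, pick a simple path $\tau=(\tau_0,\dots,\tau_k)\in\rho(\sigma,\eta)$ realizing the additivity $\Phi(\sigma,\eta)=\sum_{i}\Phi(\tau_i,\tau_{i+1})$, which exists by hypothesis. The coupling of the full chain starting from $(\sigma,\eta)$ is built by composing the given neighbor-couplings along the path: we use the \emph{same} random choice (the circuit $C$ and the proposed value) in every coordinate $\tau_0,\dots,\tau_k$, obtaining a coupled step $(\tau_0',\dots,\tau_k')$ where each consecutive pair $(\tau_i',\tau_{i+1}')$ is distributed as the given coupling applied to $(\tau_i,\tau_{i+1})$. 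Then by linearity of expectation and the triangle inequality for $\Phi$,
\begin{align*}
E[\Phi(\sigma',\eta')] \le \sum_{i} E[\Phi(\tau_i',\tau_{i+1}')] \le \beta\sum_{i}\Phi(\tau_i,\tau_{i+1}) = \beta\,\Phi(\sigma,\eta).
\end{align*}
This is the crux: contraction on single edges plus additivity of $\Phi$ yields contraction globally, so $E[\Phi(X_{t+1},Y_{t+1})\mid X_t,Y_t]\le\beta\,\Phi(X_t,Y_t)$.

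\medskip

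\noindent Next I would iterate. Taking expectations over $t$ steps of the coupled chain, $E[\Phi(X_t,Y_t)]\le\beta^{t}\,\Phi(X_0,Y_0)\le\beta^{t}D$, since $\Phi$ takes values in $\{0,\dots,D\}$. Because $\Phi$ is integer-valued and $\Phi(X,Y)=0$ forces $X=Y$ (a metric vanishes only on the diagonal), Markov's inequality gives
\begin{align*}
\Pr[X_t\neq Y_t] = \Pr[\Phi(X_t,Y_t)\ge 1] \le E[\Phi(X_t,Y_t)] \le \beta^{t}D.
\end{align*}
The coupling inequality bounds the total variation distance to stationarity by the collision probability: $\Delta_i(t)\le\max_{\sigma,\eta}\Pr[X_t\neq Y_t\mid X_0=\sigma,Y_0=\eta]\le\beta^{t}D$, uniformly over the starting state $\sigma_i$. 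Solving $\beta^{t}D\le\varepsilon$ for $t$, using $\log\beta\le\beta-1<0$ so that $1/(-\log\beta)\le 1/(1-\beta)$, yields $t\ge\log(D\varepsilon^{-1})/(1-\beta)$, which is exactly the claimed bound $\tau(\varepsilon)\le\log(D\varepsilon^{-1})/(1-\beta)$.

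\medskip

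\noindent The step I expect to need the most care is the construction of the path-composed coupling and the verification that each consecutive coordinate pair $(\tau_i',\tau_{i+1}')$ really has the marginal law of the prescribed neighbor-coupling: one must check that the chain $\{Z_t\}$'s update rule (choosing a circuit and a target value, then either moving or staying) is compatible with driving all $k+1$ copies by one shared source of randomness, and that a single update changes the Hamming configuration so that neighbors stay neighbors or collapse — here the ``stay if invalid'' feature of the chain is the subtle point, since the invalidity pattern can differ across the path, but the same-randomness construction still keeps each pair in a valid coupling and the additive metric $\Phi$ absorbs the bookkeeping. Everything else is the routine Bubley--Dyer iteration, and we may invoke \cite{R.Bubley/M.Dyer/1997, M.Dyer/C.Greenhill/2000MC} for the general template.
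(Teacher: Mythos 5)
The paper does not prove this lemma at all: it is imported by citation from Dyer and Greenhill \cite{M.Dyer/C.Greenhill/2000MC}, so there is no in-paper argument to compare against. Your proposal is a correct reconstruction of the standard Bubley--Dyer path-coupling proof: lifting the one-edge contraction to arbitrary pairs via the additive path decomposition and the triangle inequality, iterating to get $E[\Phi(X_t,Y_t)]\le\beta^tD$, passing to $\Pr[X_t\neq Y_t]$ by Markov's inequality (which is exactly where the hypothesis that $\Phi$ is integer-valued, hence at least $1$ off the diagonal, is used), and finishing with the coupling inequality and $-\log\beta\ge 1-\beta$. The one place I would tighten the write-up is the composition of couplings along the path: in the general lemma one invokes the standard gluing construction to produce a joint law on $(\tau_0',\dots,\tau_k')$ whose consecutive marginals are the prescribed neighbor-couplings, rather than literally reusing ``the same randomness'' in every coordinate --- the latter happens to work here because the chain's coupling is the identity coupling (same circuit, same proposed value), but it is not how the lemma is proved in general. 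With that caveat your argument is sound and supplies a proof the paper omits.
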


\section{Analysis}
\label{sec:typesetting-Analysis}
For the circuit decomposition $\mathcal{C}$ of the underlying graph of the six-vertex model, if there are no three circuits intersecting with one another, then we say the underlying graph is 
two-by-two-intersection free.
In this section, we analyse the mixing speed of $\{Z_t\}$ on two-by-two-intersection free graphs.

\subsection{Potential Function}
\label{subsec:typesetting-Potential Function}
We will define the potential function as the distance of states associated with the path coupling at step $t$, such that if the potential function is zero, then the two chains  will eventually reach the same state. 
According to path coupling, we consider a pair of states $\sigma \sim {\sigma _{{C_i}}}$. 
If the next move is on the ``bad neighbors'' of $C_i$, then the move will not work in one of chains and  causes more differences between two chains.
Thus we will define a potential function associated with the number of the ``bad neighbors'' of $C_i$.

For a pair of adjacent state $\sigma \sim \sigma_{{C_i}}$ of $\{X_t\}$ and $\{Y_t\}$, the circuit  ${C_j}$ is blocked if ${C_j}$ has at least one neighbor which take the value 1 in both $\sigma $ and ${\sigma _{{C_i}}}$.
Denote the set of blocked neighbors of ${C_i}$ by 
\begin{align*}
	B(\sigma ,{C_i}) = \left\{ {{C_j}:{C_j} \in \Gamma ({C_i}), \ \text{there exists} \ {C_k} \in \Gamma \left( {{C_j}} \right) \ \text{s.t.} \ \sigma \left( {{C_k}} \right) = 1} \right\}.
\end{align*}
And the set of unblocked circuits is $\Gamma \left( {{C_i}} \right)\backslash B\left( {\sigma ,{C_i}} \right)$ for $\sigma$ and $\sigma_{{C_i}}$. For the current state $(\sigma, \sigma_{C_i})$ of the coupling $(X_t, Y_t)$, the move on an unblocked circuit only works in $\sigma$, and it makes more difference between $\{X_t\}$ and $\{Y_t\}$.
The move on a blocked circuit will keep the difference between the states of $\{X_t\}$ and $\{Y_t\}$.
Note that unblocked circuits are ``bad neighbors'' we mentioned above.
For some technical reasons, we define the potential function as the number of the neighbors of ${C_i}$ minus a positive constant $w<1$ times the number of the blocked neighbors of ${C_i}$, instead of the number of the unblocked neighbors of $C_i$.
Specifically, for $w = \frac{{{b\delta }}}{{{b^\delta } + 2}}$ (note that $w<1$ if $b\le \frac{1}{\delta}$), the potential function $\Phi $ is as follows,
\begin{align}\label{potential}
	\Phi (\sigma ,{\sigma _{{C_i}}}) = {\delta _{{C_i}}} - w\left| {B(\sigma ,{C_i})} \right|.
\end{align}
For any states $\sigma $ and $\eta $, the potential function is 
\begin{align*}
	\Phi (\sigma ,\eta ) = \mathop {\min }\limits_{\tau  \in \rho(\sigma ,\eta )} \sum\limits_i {\Phi ({\tau _i},{\tau _{i + 1}})}.
\end{align*}
This potential function $\Phi $ clearly satisfies the following conditions for any $\sigma$, $\eta  \in \Omega$:
\begin{itemize}
\item $\Phi (\sigma ,\eta ) \le \Phi (\sigma ,\varsigma ) + \Phi (\varsigma ,\eta )$, for any $\varsigma  \in \Omega $;
\item $\Phi (\sigma ,\eta ) = \Phi (\eta ,\sigma )$;
\item $\Phi (\sigma ,\eta ) \ge 0$;
\item $\Phi (\sigma ,\eta ) = 0 \Leftrightarrow \sigma  = \eta$.
\end{itemize}

\subsection{Analysis on the Potential Function}
\label{subsec:typesetting-Analysis on the Potential Function}

Recall that we set $X_{t}=\sigma$ and $Y_{t}={\sigma _{{C_i}}}$, where $\sigma $ and ${\sigma _{{C_i}}}$ are neighbors, $\sigma_{C_i}(C_i)=1$ and $\sigma(C_j)=\sigma_{C_i}(C_j)$ for $j\neq i$, 
and the next states are $X_{t+1}=\sigma '$ and  $Y_{t+1}={\sigma '_{{C_i}}}$. Let $\Phi  = \Phi \left( {\sigma ,{\sigma _{{C_i}}}} \right)$ and $E\left[ {\Delta \Phi } \right] = E\left[ {\Phi \left( {\sigma ',{{\sigma '}_{{C_i}}}} \right) - \Phi \left( {\sigma ,{\sigma _{{C_i}}}} \right)} \right]$.  
We now analyze $E\left[ {\Delta \Phi } \right]$ to find the constant $\beta$ in \cref{coupling}.

We classify the circuits in $\mathcal{C}$ into four types: ${C_i}$, ${C_j}$ as a neighbor of ${C_i}$, ${C_k}$ as a neighbor of the neighbors of ${C_i}$, 
and $C_{\ell}$ as others. In the following analysis, we will only discuss the effect of moves on ${C_i}$, ${C_j}$ and ${C_k}$, because the moves on $C_{\ell}$ have no effect on $\Phi $. For any circuit ${C_x}$, let
\begin{align*}
	E\left[ {{\Delta ^{ + {C_x}}}\Phi } \right] = E\left[ {\Delta \Phi |\text{Markov Chain attempts to move $C_x$ to $1$}}\right],
\end{align*}
\begin{align*}
	E\left[ {{\Delta ^{ - {C_x}}}\Phi } \right] = E\left[ {\Delta \Phi |\text{Markov Chain attempts to move $C_x$ to $0$}}\right],	
\end{align*}
\begin{align*}
	E\left[ {{\Delta ^{{C_x}}}\Phi } \right] = E\left[ {\Delta \Phi |\text{Markov Chain attempts to move $C_x$ }}\right],	
\end{align*}
then 
\begin{align*}
	E\left[ {{\Delta ^{{C_x}}}\Phi } \right] = \frac{{{b^{{\delta _{{C_x}}}}}}}{{1 + {b^{{\delta _{{C_x}}}}}}}E\left[ {{\Delta ^{ + {C_x}}}\Phi } \right] + \frac{1}{{1 + {b^{{\delta _{{C_x}}}}}}}E\left[ {{\Delta ^{ - {C_x}}}\Phi }\right].
\end{align*}\par
By the above notations, we can rewrite $E\left[ {\Delta \Phi } \right]$ as follows:
\begin{align}\label{eq8}
	E\left[ {\Delta \Phi } \right] = \frac{1}{n}\left[ {E\left[ {{\Delta ^{{C_i}}}\Phi } \right] + \sum\limits_{{C_j} \in \Gamma ({C_i})} {E\left[ {{\Delta ^{{C_j}}}\Phi } \right]}  + \sum\limits_{{C_k} \in \Gamma (\Gamma ({C_i}))} {E\left[ {{\Delta ^{{C_k}}}\Phi } \right]} } \right].
\end{align}\par
Here, $\Gamma (\Gamma ({C_i}))$ is the set of the neighbors of the neighbors of $C_i$. 

Now we consider the moves on ${C_i}$, ${C_j}$ or ${C_k}$ separately. 
\begin{itemize}
	\item {Move on ${C_i}$:\par In this case, a move on ${C_i}$ will work in both $\{X_t\}$ and $\{Y_t\}$. 
		After the move, $\left\{ {{X_t}} \right\}$ and  $\left\{ {{Y_t}} \right\}$ will reach the same state which is $\sigma $ or ${\sigma _{{C_i}}}$. 
		Thus, we have 
		\begin{align*}
			E\left[ {{\Delta ^{ + {C_i}}}\Phi } \right] = E\left[ {{\Delta ^{ - {C_i}}}\Phi } \right]=  - {\delta _{{C_i}}} + w\left| {B\left( {\sigma ,{C_i}} \right)}\right|, 
		\end{align*}\par 
		i.e.,  
		\begin{align}E\left[ {{\Delta ^{{C_i}}}\Phi } \right] = - {\delta _{{C_i}}} + w\left| {B(\sigma ,{C_i})} \right|.\end{align}}
	\item Move on ${C_j}$:\par
	Since ${C_j}$ is a neighbor of ${C_i}$, ${C_j}$ must take the  value $0$ in $\sigma_{C_i}$. Note that $\sigma$ and $\sigma_{C_i}$ 
	differ only at $C_i$, so ${C_j}$ takes the value $0$ in $\sigma$. Thus, $E[{\Delta ^{ - {C_j}}}\Phi ] = 0$.
	
	Consider the move which attempts to move ${C_j}$ to $1$. If ${C_j}$ is unblocked,  then this move will only work in $\{X_t\}$, and we have
	\begin{align*}
			\Delta ^{+C_j} \Phi  
			&= \Phi (\sigma ',{{\sigma '}_{{C_i}}}) - \Phi (\sigma ,{\sigma _{{C_i}}})\\ 
			&= \Phi ({\sigma _{{C_j}}},{\sigma _{{C_i}}}) - \Phi (\sigma ,{\sigma _{{C_i}}})\\
			&\le \Phi ({\sigma _{{C_j}}},\sigma ) + \Phi (\sigma ,{\sigma _{{C_i}}}) - \Phi (\sigma ,{\sigma _{{C_i}}})\\
			&= \Phi ({\sigma _{{C_j}}},\sigma )\\
			&= \delta _{{C_j}} - w\left| {B(\sigma ,{C_j})} \right|.
	\end{align*}
	If ${C_j}$ is blocked, then this move will work neither in $\{X_t\}$ nor in $\{Y_t\}$. And both chains will stay the current state. Thus $\Delta ^{+{C_j}}\Phi = 0$.
	Thus we have 
	\begin{align}\label{ECj}
		E\left[ {{\Delta ^{{C_j}}}\Phi } \right] \le \left\{ {\begin{array}{*{20}{l}}
				{\frac{{{b^{{\delta _{{C_j}}}}}}}{{1 + {b^{{\delta _{{C_j}}}}}}} \cdot \left( {{\delta _{{C_j}}} - w\left| {B(\sigma ,{C_j})} \right|} \right)}&{\text{if} \ {C_j} \notin B\left( {\sigma ,{C_i}} \right)},\\
				0&{\text{otherwise}}.
		\end{array}} \right.
	\end{align}
	\item Move on ${C_k}$:\par
	Firstly, we assume that ${C_k}$ takes the value $1$ in  $\sigma $, then ${C_k}$  also takes the value $1$ in ${\sigma _{{C_i}}}$. The move which attempts to move ${C_k}$ to $1$ will work in both $\{X_t\}$ and $\{Y_t\}$. And both of them will stay the current state. Thus
	\begin{align}\label{+Ck=1}
		{\Delta ^{ + {C_k}}}\Phi = 0.
	\end{align}
	Then we consider the move which attempts to move ${C_k}$ to $0$. This move will work in both $\{X_t\}$ and $\{Y_t\}$ and it might change some ${C_j}$ from blocked to unblocked. The set of such ${C_j}$ is defined as follows: 
	\begin{align*}
		{\alpha _{{C_k}}} = \left\{ {{C_j}:{C_j} \in B\left( {\sigma ,{C_i}} \right),\;  \sigma \left( {{{C'}_k}} \right) = 0} \ \text{for any} \ {{C'}_k} \in \Gamma \left( {{C_j}} \right) \right\}.
	\end{align*}
	We have
	\begin{align*}
			{\Delta ^{ - {C_k}}}\Phi  
			&= \Phi (\sigma ',{\sigma '_{{C_i}}}) - \Phi (\sigma ,{\sigma _{{C_i}}}) \\
			&= {\delta _{{C_i}}} - w\left| {B\left( {\left( {{\sigma _{{C_i}}}} \right)_{{C_k}}^ - ,{C_i}} \right)} \right| - \left( {{\delta _{{C_i}}} - w\left| {B(\sigma ,{C_i})} \right|} \right) \\
			&= w\left( {\left| {B(\sigma ,{C_i})} \right| - \left| {B\left( {\left( {{\sigma _{{C_i}}}} \right)_{{C_k}}^ - ,{C_i}} \right)} \right|} \right) \\
			&= w\left| {{\alpha _{{C_k}}}} \right|,
	\end{align*}
where ${\left( {{\sigma _{{C_i}}}} \right)_{{C_k}}^ - }$ denotes the configuration in which all the circuits take the same value as
$\sigma_{C_i}$ except for ${\left( {{\sigma _{{C_i}}}} \right)_{{C_k}}^ - }(C_k)=0$ and $\sigma_{{C_i}}(C_k)=1$.
Thus
	\begin{align}\label{-Ck=1}
		{\Delta ^{ - {C_k}}}\Phi  = \left\{ {\begin{array}{*{20}{l}}
				{w\left| {{\alpha _{{C_k}}}} \right|}&{\text{if} \ \sigma \left( {{C_k}} \right) = 1},\\
				0&{\text{otherwise}.}
		\end{array}} \right.
	\end{align}
	Secondly, we assume that ${C_k}$ takes the value $0$ in $\sigma $. The move which attempts to move ${C_k}$ to $0$ will work in both $\{X_t\}$ and $\{Y_t\}$. And both will stay the current state. Thus
	\begin{align}\label{-Ck=0}
		{\Delta ^{ - {C_k}}}\Phi = 0. 
	\end{align}
	Then consider the move which attempts to move ${C_k}$ to $1$. Recall that the graph is two-by-two-intersection free, so ${C_k}$ and ${C_i}$ are not adjacent.
	If there exists a neighbor of ${C_k}$ which takes the value $1$ in $\sigma $, then this move will not work in neither of two chains. Thus both chains stay the current states and $\Delta ^{+C_k}\Phi  = 0$; if all the neighbors of $C_k$ take the value 0 in $\sigma$ and denote the fact by $\sigma \left( {\Gamma \left( {{C_k}} \right)} \right) = 0$, then this move will work in both chains. Note that moving $C_k$ to 1 which works in both chains effects the potential function by changing ${C_j}$  from unblocked to blocked. The set of such ${C_j}$ is defined as follows:
	\begin{align*}
		{\beta _{{C_k}}} = \left\{ {{C_j}:{C_j} \notin B\left( {\sigma ,{C_i}} \right),{C_j} \in \Gamma \left( {{C_i}} \right) \cap \Gamma \left( {{C_k}} \right)} \right\}.
	\end{align*}
	We have
	\begin{align*}
			{\Delta ^{+C_k}}\Phi  
			&= \Phi (\sigma ',{{\sigma '}_{{C_i}}}) - \Phi (\sigma ,{\sigma _{{C_i}}}) \\
			&= w\left( {\left| {B(\sigma ,{C_i})} \right| - \left| {B(\sigma_{C_k},{C_i})} \right|} \right) \\
			&=  - w\left| {{\beta _{{C_k}}}} \right|.
	\end{align*}
	Thus 
	\begin{align}\label{+Ck=0}
		{\Delta ^{+C_k}}\Phi  = \left\{ {\begin{array}{*{20}{l}}
				{ - w\left| {{\beta _{C_k}}} \right|}&{\text{if} \ \sigma \left( {\Gamma \left( {{C_k}} \right)} \right) = 0 \ {\text{and}} \ \sigma({C_k}) = 0},\\
				0&{\text{otherwise}}.
		\end{array}} \right.
	\end{align}
	Combining \cref{-Ck=1} and \cref{-Ck=0}, we have
	\begin{align}\label{-Ck}
		{\Delta ^{ - {C_k}}}\Phi  = \left\{ {\begin{array}{*{20}{l}}
				0&{\sigma({C_k}) = 0 }, \\
				{w\left| {{\alpha _{{C_k}}}} \right|}&{\sigma({C_k}) = 1 }.
		\end{array}} \right.
	\end{align}
	Combining \cref{+Ck=1} and \cref{+Ck=0}, we have
	\begin{align}\label{+Ck}
		{\Delta ^{{\rm{ + }}{C_k}}}\Phi  = \left\{ {\begin{array}{*{20}{l}}
				0&{{\rm{if}}\;\sigma \left( {{C_k}} \right) = 1},\\
				{ - w\left| {{\beta _{{C_k}}}} \right|}&{{\rm{if}}\;\sigma \left( {\Gamma \left( {{C_k}} \right)} \right) = 0\;{\rm{and}}\;\sigma \left( {{C_k}} \right) = 0},\\
				0&{{\rm{otherwise}}}.
		\end{array}} \right.
	\end{align}
	Combining \cref{-Ck} and \cref{+Ck}, for any neighbor of neighbors of $C_i$, i.e., ${C_k}$, we have
	\begin{align}\label{ECk}
		E\left[ {{\Delta ^{{C_k}}}\Phi } \right] = \left\{ {\begin{array}{*{20}{l}}
				{\frac{{\left| {{\alpha _{{C_k}}}} \right|}}{{1 + {b^{{\delta _{{C_k}}}}}}}w}&{{\rm{if}}\;\sigma \left( {{C_k}} \right) = 1},\\
				{ - \frac{{{b^{{\delta _{{C_k}}}}}\left| {{\beta _{{C_k}}}} \right|}}{{1 + {b^{{\delta _{{C_k}}}}}}}w}&{{\rm{if}}\;\sigma \left( {\Gamma \left( {{C_k}} \right)} \right) = 0\;{\rm{and}}\;\sigma \left( {{C_k}} \right) = 0},\\
				0&{{\rm{otherwise}}}.
		\end{array}} \right.
	\end{align}
	
\end{itemize}

After analyzing the moves on $C_i$, $C_j$ and $C_k$ separately, we are ready to bound $E[\Delta \Phi ]$. It seemed that we can compute $E[\Delta \Phi ]$ by \cref{eq8}. But it can not work in practice, since we do not know the number of $C_k$. Thus, we can not give a sharp bound for $E[\Delta \Phi ]$ by \cref{eq8}. One way to handle this problem is ``packing'': 
Each $C_j$ has a set of $C_k$ which change $C_j$ from unblocked to blocked (or from blocked to unblocked), and we pack such $C_k$ into a set.
This idea will give a ``holistic'' property which tends to give a sharper bound. In the following analysis, we will formalize this idea.
For a blocked circuit ${C_j} \in \Gamma({C_i})$, we define the set
\begin{align*}
	\Gamma '\left( {{C_j}} \right) = \left\{ {{C_k}:{C_k} \in \Gamma \left( {{C_j}} \right)\backslash \left\{ {{C_i}} \right\},{C_j} \in {\alpha _{{C_k}}}} \right\},
\end{align*}
and
\begin{align*}
	E\left[ {{\Delta ^{ * {C_j}}}\Phi } \right] = E\left[ {{\Delta ^{{C_j}}}\Phi } \right] + \sum\limits_{{C_k} \in \Gamma '({C_j})} {\frac{1}{{\left| {{\alpha _{{C_k}}}} \right|}}E\left[ {{\Delta ^{{C_k}}}\Phi } \right]}. 
\end{align*}
For an unblocked circuit ${C_j}\in \Gamma({C_i})$, we define 
\begin{align*}
	\Gamma '\left( {{C_j}} \right) = \left\{ {{C_k}:{C_k} \in \Gamma \left( {{C_j}} \right)\backslash \left\{ {{C_i}} \right\},{C_j} \in {\beta _{{C_k}}}} \right\}
\end{align*}
and
\begin{align*}
	E\left[ {{\Delta ^{ * {C_j}}}\Phi } \right] = E\left[ {{\Delta ^{{C_j}}}\Phi } \right] + \sum\limits_{{C_k} \in \Gamma '({C_j})} {\frac{1}{{\left| {{\beta _{{C_k}}}} \right|}}E\left[ {{\Delta ^{{C_k}}}\Phi } \right]}. 
\end{align*}
We can rewrite \cref{eq8} as 
\begin{align*}
	E\left[ {\Delta \Phi } \right] = \frac{1}{n}\left[ {E\left[ {{\Delta ^{{C_i}}}\Phi } \right] + \sum\limits_{{C_j} \in \Gamma ({C_i})} {E\left[ {{\Delta ^{ * {C_j}}}\Phi } \right]} } \right].
\end{align*}
Firstly, we bound $E[{\Delta ^{ * {C_j}}}\Phi ]$. 
\begin{itemize}
	\item  If ${C_j}$ is blocked.\par
	By \cref{ECj}, we know that $E[{\Delta ^{{C_j}}}\Phi ] = 0$. 
	According to the definition of $\alpha _{C_k}$, if $C_j \in \alpha _{C_k}$, then there exists ${C_k}$ such that ${C_k} \in \left( {\Gamma \left( {{C_j}} \right)} \right)$ and ${\sigma \left( {{C_k}} \right) = 1}$, which implies that $\left| {\Gamma '({C_j})} \right| \le 1$. Thus we have
	\begin{align*}
			E\left[ {{\Delta ^{ * {C_j}}}\Phi } \right] 
			&= E\left[ {{\Delta ^{{C_j}}}\Phi } \right] + \sum\limits_{{C_k} \in \Gamma '({C_j})}  {\frac{1}{{\left| {{\alpha _{{C_k}}}} \right|}}E\left[ {{\Delta ^{{C_k}}}\Phi } \right]} \\
			&= 0 + \sum\limits_{{C_k} \in \Gamma '({C_j})} {\frac{1}{{\left| {{\alpha _{{C_k}}}}  \right|}}\frac{{\left| {{\alpha _{{C_k}}}} \right|}}{{1 + {b^{{\delta _{{C_k}}}}}}}w} \\
			&=\sum\limits_{{C_k} \in \Gamma '({C_j})}\frac{w}{1 + b^{\delta _{C_k}}} \\
			&\le \sum\limits_{{C_k} \in \Gamma '({C_j})}\frac{w}{{1 + {b^\delta }}} \\
			&=\left| {\Gamma '\left( {{C_j}} \right)} \right|\frac{w}{{1 + {b^\delta }}} \\
			&\le \frac{w}{{1 + {b^\delta }}}. 
	\end{align*}
	\item If ${C_j}$ is unblocked.\par
	By \cref{ECj}, we have $E\left[ {{\Delta ^{{C_j}}}\Phi } \right]\; \le \frac{{{b^{{\delta _{{C_j}}}}}}}{{1 + {b^{{\delta _{{C_j}}}}}}}\left( {{\delta _{{C_j}}} - w\left| {B\left( {\sigma ,{C_j}} \right)} \right|} \right)$. 
	Moreover, we have
	\begin{align*}
			E\left[ {{\Delta ^{ * {C_j}}}\Phi } \right] 
			&= E\left[ {{\Delta ^{{C_j}}}\Phi } \right] + \sum\limits_{{C_k} \in \Gamma '({C_j})} {\frac{1}{{\left| {{\beta _{{C_k}}}} \right|}}E\left[ {{\Delta ^{{C_k}}}\Phi } \right]} \\
			&\le \frac{{{b^{{\delta _{{C_j}}}}}}}{{1 + {b^{{\delta _{{C_j}}}}}}}\left( {{\delta _{{C_j}}} - w\left| {B\left( {\sigma ,{C_j}} \right)} \right|} \right) - \sum\limits_{{C_k} \in \Gamma '({C_j}),\sigma \left( {\Gamma \left( {{C_k}} \right)} \right) = 0} {\frac{1}{{\left| {{\beta _{{C_k}}}} \right|}}\frac{{{b^{{\delta _{{C_k}}}}}\left| {{\beta _{{C_k}}}} \right|}}{{1 + {b^{{\delta _{{C_k}}}}}}}w} \\
			&\le \frac{{{b^{{\delta _{{C_j}}}}}}}{{1 + {b^{{\delta _{{C_j}}}}}}}\left( {{\delta _{{C_j}}} - w\left| {B(\sigma ,{C_j})} \right|} \right) \\
			&\le \frac{{b\delta }}{{1 + {b^\delta }}}.
	\end{align*}
\end{itemize}
Note that $w = \frac{{{b\delta }}}{{{b^\delta } + 2}}$. Thus $E[\Delta \Phi ]$ can be bounded as follows.
\begin{equation}\label{E}
\begin{aligned}
	nE\left[ {\Delta \Phi } \right] 
	&= \left[ {E\left[ {{\Delta ^{{C_i}}}\Phi } \right] + \sum\limits_{{C_j} \in \Gamma ({C_i})} {E\left[ {{\Delta ^{ * {C_j}}}\Phi } \right]} } \right] \\
	&= \left[ { - {\delta _{{C_i}}} + w\left| {B\left( {\sigma ,{C_i}} \right)} \right| + \sum\limits_{{C_j} \in \Gamma \left( {{C_i}} \right),{C_j} \in B\left( {\sigma ,{C_i}} \right)} {E\left[ {{\Delta ^{ * {C_j}}}\Phi } \right]}  + \sum\limits_{{C_j} \in \Gamma \left( {{C_i}} \right),{C_j} \notin B\left( {\sigma ,{C_i}} \right)} {E\left[ {{\Delta ^{ * {C_j}}}\Phi } \right]} } \right] \\
	&\le \left[ { - {\delta _{{C_i}}} + \sum\limits_{{C_j} \in \Gamma \left( {{C_i}} \right),{C_j} \in B\left( {\sigma ,{C_i}} \right)} {\frac{{2 + {b^\delta }}}{{1 + {b^\delta }}}w}  + \sum\limits_{{C_j} \in \Gamma \left( {{C_i}} \right),{C_j} \notin B\left( {\sigma ,{C_i}} \right)} {\frac{{b\delta }}{{1 + {b^\delta }}}} } \right] \\
	&= \frac{1}{\left( {1 + {b^\delta }} \right)}\left[ -(1 + {b^\delta })\delta _{{C_i}}+\left|B\left( {\sigma ,{C_i}} \right) \right|(2 + {b^\delta })w+(\delta_{C_i}-\left|B\left( {\sigma ,{C_i}} \right) \right|)b\delta\right]\\
	&= \frac{{{\delta _{{C_i}}}}}{{\left( {1 + {b^\delta }} \right)}}\left( {b\delta  - 1 - {b^\delta }} \right).
\end{aligned}
\end{equation}
Notice that $E[\Delta \Phi ] \le 0 $, when $b \le \frac{1}{\delta }$.

Now we are ready to bound $E\left[ {\Delta \Phi } \right]$, which will bound the mixing time. We define ${\beta _{\sigma ,{\sigma _{{C_i}}}}}$ such that
\begin{align*}
	E\left[ {\Phi \left( {\sigma ',{{\sigma '}_{{C_i}}}} \right)} \right] = {\beta _{\sigma ,{\sigma _{{C_i}}}}}\Phi \left( {\sigma ,{\sigma _{{C_i}}}} \right).
\end{align*}
Thus we have 
\begin{align*}
	E\left[ {\Delta \Phi } \right] = E\left[ {\Phi \left( {\sigma ',{{\sigma '}_{{C_i}}}} \right)} \right] - \Phi \left( {\sigma ,{\sigma _{{C_i}}}} \right) = \left( {{\beta _{\sigma ,{\sigma _{{C_i}}}}} - 1} \right)\Phi \left( {\sigma ,{\sigma _{{C_i}}}} \right),
\end{align*}
and
\begin{align*}
	{\beta _{\sigma ,{\sigma _{{C_i}}}}} = 1{\rm{ + }}\frac{{E\left[ {\Delta \Phi } \right]}}{{\Phi \left( {\sigma ,{\sigma _{{C_i}}}} \right)}}.
\end{align*}
According to Lemma \ref{coupling}, we set $\beta  = {\max _{\sigma ,{\sigma _{{C_i}}}}}{\beta _{\sigma ,{\sigma _{{C_i}}}}}$. From \cref{potential}, i.e., the definition of $\Phi $, it is easy to know that $ {\Phi \left( {\sigma ,{\sigma _{{C_i}}}} \right)} \le {\delta _{{C_i}}} $. By the bound of $E[\Delta\Phi]$ in \cref{E}, we have
\begin{align*}
	\beta  \le 1{\rm{ + }}\frac{1}{{n\left( {1 + {b^\delta }} \right)}}\left( {b\delta  - 1 - {b^\delta }} \right).
\end{align*}
For any states $\sigma $ and $\eta $, we have $\Phi (\sigma ,\eta ) \le n\delta $. 
Thus, we have
\begin{equation}\label{varepsilon}
	\begin{aligned}
		E\left[ {\Phi \left( {{X_t},{Y_t}} \right)} \right] 
		&\le {\beta ^t} \cdot E\left[ {\Phi \left( {{X_0},{Y_0}} \right)} \right] \\
		&\le {\left( {1{\rm{ + }}\frac{1}{{n\left( {1 + {b^\delta }} \right)}}\left( {b\delta  - 1 - {b^\delta }} \right)} \right)^t} \cdot n\delta \\
		&\le {e^{ \left( {\frac{1}{{n\left( {1 + {b^\delta }} \right)}}\left( {b\delta  - 1 - {b^\delta }} \right)} \right)t}} \cdot n\delta. 
	\end{aligned}
\end{equation}
\cref{varepsilon} and \cref{coupling} produce a bound of the mixing time
\begin{align*}
	\tau (\varepsilon ) \le \frac{{n\left( {1 + {b^\delta }} \right)}}{{{b^\delta } + 1 - b\delta }}\log \left( {\frac{{n\delta }}{\varepsilon }} \right).
\end{align*}
\begin{theorem}
	(Mixing time of Markov chain). For the six-vertex model with the constraint function $f^*$, whose underlying graph is two-by-two-intersection free, the Markov Chain $\{Z_t\}$ mixes in time $O\left( {\frac{{n\left( {1{\rm{ + }}{b^\delta }} \right)}}{{{b^\delta } + 1 - b\delta }}\log \left( {n\delta /\varepsilon } \right)} \right)$ for $\delta  \ge 2$, $b \le \frac{1}{\delta }$. 
\end{theorem}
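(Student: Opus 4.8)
The statement collects the estimates assembled in \cref{sec:typesetting-Analysis}, so the plan is simply to feed them into the path coupling lemma. Concretely, I would invoke \cref{coupling} with the potential $\Phi$ of \cref{potential} and with $D=n\delta$, the latter because $\Phi(\sigma,\eta)\le n\delta$ for all $\sigma,\eta$. The properties required of $\Phi$ are already in place: it is symmetric, non-negative, vanishes exactly on the diagonal, obeys the triangle inequality, and — being defined as $\min_{\tau\in\rho(\sigma,\eta)}\sum_i\Phi(\tau_i,\tau_{i+1})$ — is additive along some shortest simple path. So the whole content of the proof is to exhibit a contraction factor $\beta<1$ for adjacent pairs $\sigma\sim\sigma_{C_i}$ under the coupling where both chains pick the same circuit and the same biased coin at each step.

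To get $\beta$ I would split $E[\Delta\Phi]$ according to which circuit the update lands on, as in \cref{eq8}: on $C_i$ itself, on a neighbour $C_j\in\Gamma(C_i)$, on a second neighbour $C_k\in\Gamma(\Gamma(C_i))$, or on a circuit that changes nothing. An update on $C_i$ coalesces the two chains and contributes $-\delta_{C_i}+w|B(\sigma,C_i)|$; on $C_j$ only a move to $1$ on an unblocked $C_j$ matters, contributing at most $\frac{b^{\delta_{C_j}}}{1+b^{\delta_{C_j}}}\big(\delta_{C_j}-w|B(\sigma,C_j)|\big)$; on $C_k$, the two-by-two-intersection free hypothesis forces $C_k\not\sim C_i$, so the only effect is to toggle some neighbours of $C_i$ between blocked and unblocked, giving the three cases of \cref{ECk}. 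The real difficulty — the step I expect to be the crux — is that one cannot sum \cref{eq8} directly because the number of second neighbours $C_k$ is uncontrolled. I would handle this by the ``packing'' device already set up in the excerpt: redistribute each $E[\Delta^{C_k}\Phi]$ among the neighbours $C_j$ whose status $C_k$ toggles, with weight $1/|\alpha_{C_k}|$ (when $C_j$ is blocked) or $1/|\beta_{C_k}|$ (when $C_j$ is unblocked), forming $E[\Delta^{*C_j}\Phi]$. The point is that a blocked $C_j$ already sees a circuit valued $1$, hence $|\Gamma'(C_j)|\le 1$ and its aggregated contribution is at most $\frac{w}{1+b^\delta}$, while an unblocked $C_j$ contributes at most $\frac{b\delta}{1+b^\delta}$ because the redistributed $C_k$-terms are non-positive.

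Summing the per-$C_j$ bounds over $\Gamma(C_i)$ and choosing $w=\frac{b\delta}{b^\delta+2}$ makes the $|B(\sigma,C_i)|$ terms cancel, leaving $nE[\Delta\Phi]\le\frac{\delta_{C_i}}{1+b^\delta}(b\delta-1-b^\delta)\le 0$ whenever $b\le\frac1\delta$, which is exactly \cref{E}. Since $\Phi(\sigma,\sigma_{C_i})\le\delta_{C_i}$, this yields $\beta=\max_{\sigma,\sigma_{C_i}}\big(1+E[\Delta\Phi]/\Phi(\sigma,\sigma_{C_i})\big)\le 1+\frac{1}{n(1+b^\delta)}(b\delta-1-b^\delta)<1$; iterating gives $E[\Phi(X_t,Y_t)]\le\beta^t n\delta\le e^{(b\delta-1-b^\delta)t/(n(1+b^\delta))}n\delta$ as in \cref{varepsilon}, and \cref{coupling} with $D=n\delta$ then gives $\tau(\varepsilon)\le\frac{n(1+b^\delta)}{b^\delta+1-b\delta}\log(n\delta/\varepsilon)$, i.e. the claimed $O\!\big(\frac{n(1+b^\delta)}{b^\delta+1-b\delta}\log(n\delta/\varepsilon)\big)$ bound for $\delta\ge 2$, $b\le\frac1\delta$. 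One technical caveat I would record: because of the factor $w$, $\Phi$ is rational- rather than integer-valued, so one should either use the real-valued form of path coupling or rescale $\Phi$ by a common denominator of $w$; neither changes the final bound.
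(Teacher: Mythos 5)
Your proposal is correct and follows essentially the same route as the paper's own argument: the same path-coupling setup with the potential of \cref{potential}, the same case analysis over moves on $C_i$, $C_j$, and $C_k$, the same ``packing'' redistribution of the $C_k$ contributions, and the same cancellation via $w=\frac{b\delta}{b^\delta+2}$ leading to \cref{E} and the stated mixing bound. Your closing caveat about $\Phi$ being rational- rather than integer-valued (and the need to rescale or use the real-valued form of path coupling) is a fair technical point that the paper itself glosses over.
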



\bibliography{bibref}
\appendix

\end{document}